\newtheorem{theorem}{Theorem}[section]
\newtheorem{proposition}[theorem]{Proposition}
\newtheorem{remark}[theorem]{Remark}
\newtheorem{ass}[theorem]{Assumption}
\newcommand{\norm}[1]{\left\lVert#1\right\rVert}
\title{Backward error analysis and the qualitative behaviour of stochastic optimization algorithms: Application to stochastic coordinate descent.}
\author{Stefano Di Giovacchino $^{1}$ \and Desmond  J. Higham$^{2,3}$ \and   Konstantinos C. Zygalakis$^{2,3}$}
\date{\today}
\begin{document}

\maketitle

\begin{abstract}
Stochastic optimization methods have been hugely successful in making large-scale optimization problems feasible when computing the full gradient is computationally prohibitive. Using the theory of modified equations for numerical integrators, we propose a class of stochastic differential equations that approximate the dynamics of general stochastic optimization methods more closely than the original gradient flow.
Analyzing a modified stochastic differential equation can reveal qualitative insights about the associated optimization method. 
Here, we study mean-square stability of the modified equation in the case of stochastic coordinate descent.
\end{abstract}

\footnotetext[1]{Department of Information Engineering, Computer Science and Mathematics, University of L'Aquila, L'Aquila, Italy.}
\footnotetext[2]{School of Mathematics, University of Edinburgh, Edinburgh, Scotland, UK}
\footnotetext[3]{Maxwell Institute for Mathematical Sciences, Bayes Centre, 47 Potterrow, Edinburgh, Scotland, UK.}

\section{Introduction}

The connection between optimization and numerical timestepping has been exploited by several researchers, leading to new insights in both fields; see, for example,  
\cite{HM94,H99,KK98,S98}.
In this work, we 
focus on stochastic optimization algorithms and their connection with 
numerical methods for stochastic ordinary differential equations (SDEs).
As in \cite{LTE19} we use a backward error, or modified equation, approach.
Here, the aim is to derive a new SDE that accurately describes the dynamics of the  
timestepping method, and hence reveals qualitative properties of the optimization method.

The main contributions of this work are
\begin{itemize} 
\item
Proposition~\ref{prop:sigma1}: a modified SDE 
 for a general class of stochastic optimization iterations, and 
\item Theorem~\ref{thm:main}: conditions that guarantee mean-square stable 
convergence to the minimizer for the stochastic coordinate descent case. 
\end{itemize}

The rest of the paper is organized as follows. In Section~\ref{sec:pre} we introduce the idea of modified equations for numerical integrators, in both
the ODE and SDE settings. We include examples of their application to deterministic Hamiltonian dynamics, and to the Ornstein-Uhlenbeck process. 
In Section~\ref{sec:opt} we discuss the main idea behind stochastic optimization methods and focus on two cases; stochastic gradient descent and  stochastic coordinate descent. Section~\ref{sec:main} presents our main results: a modified SDE 
for a general stochastic optimization iteration and 
mean-square stability conditions in the case of stochastic coordinate descent.
Finally, we conclude in Section \ref{sec:disc} with a brief discussion of possible avenues for future research. 


\section{Preliminaries} \label{sec:pre}

\subsection{Backward Error analysis and Ordinary Differential Equations}

Consider a smooth ODE 
\begin{equation} \label{eq:ODE1}
\frac{dX}{dt} = f(X), \quad X(0)=x,   
\end{equation}
where $f:\mathbb{R}^{d} \mapsto \mathbb{R}^{d}$.
Suppose we apply a numerical method with stepsize $h$ to produce an approximation $x_{n} \approx X(nh)$ at time $T=nh$. 
The objective of backward error analysis \cite{CMS94,GSH99,GH10,GSS86,HALUWA} in this setting is to find an 
ODE 
\begin{equation} \label{eq:ODE2}
\frac{d\tilde{X}}{dt} = f_{h}(\tilde{X}), \quad \tilde{X}(0)=x,    \end{equation}
that is closer than \eqref{eq:ODE1} to the numerical approximation.  
Typically $f_{h}(\tilde{X})$ has an expansion
\begin{equation} \label{eq:expan}
  f_{h}(\tilde{X})=f(\tilde{X})+h f_{1}(\tilde{X})+h^{2}f_{2}(\tilde{X})+\cdots,  
\end{equation}
for some appropriately chosen functions $f_{i}$ that are determined by the numerical method. 
We note that the ODE \eqref{eq:ODE2} depends on $h$ and is often referred to as a modified equation. 
Backward error analysis has proved to be a useful technique since in many contexts the modified equation 
\eqref{eq:ODE2} is more amenable to analysis than the underlying numerical method.

The main tool for finding the terms $f_{i}$ in \eqref{eq:expan} is the  Taylor expansion. 
We denote by $\phi_{h}(x)$ the true solution of \eqref{eq:ODE1} at time $h$ starting from $x$ and by $\psi_{h}(x)$ 
the corresponding numerical solution starting from $x$ after one step of length $h$. We know \cite{I08} that 
the local error 
for a
numerical method of order $p$ satisfies 
\begin{equation*} 
\phi_{h}(x)-\psi_{h}(x) = \mathcal{O}(h^{p+1}).
\end{equation*}
Hence in order to find an ODE that approximates the numerical method more closely we ask for 
\begin{equation*}
\tilde{\phi}_{h}(x)-\psi_{h}(x) = \mathcal{O}(h^{p+s}),
\end{equation*}
where $\tilde{\phi}_{h}(x)$ denotes the true solution of our new ODE after time $h$ starting from $x$, and $s>0$. Increasing the value of $s$ gives an ODE that approximates more closely the dynamics of our numerical method. It may be argued that $s=1$ is the most important case as it characterises the first non-zero term in \eqref{eq:expan}, and hence, for small $h$ and finite times, captures the main deviation from the original ODE. 

\subsubsection{An Application to Hamiltonian Dynamics}
We now discuss an example of backward error analysis in the case of a Hamiltonian problem. What we describe applies to non-quadratic Hamiltonians;  however, for the purpose of illustration, we present everything in the quadratic setting as this allows us to calculate the exact solution of both the original and modified equations.  We consider the ODE
\begin{equation}  \label{eq:harmonic}
\frac{dp}{dt} = q, \quad  \frac{dq}{dt}=-p,
\end{equation}
where $p$ and $q$ are scalar.
Here, the Hamiltonian function $H(p,q)=\frac{1}{2}p^{2}+\frac{1}{2}q^{2}$ remains constant for all time. 
We will study the Euler method
\begin{equation} \label{eq:euler1}
p_{n+1} =p_{n}+hq_{n}, \quad q_{n+1} =q_{n}-hp_{n},
\end{equation}
and the symplectic Euler method 
\begin{equation} \label{eq:euler2}
q_{n+1} = q_{n}-hp_{n} \quad p_{n+1} = p_{n}+hq_{n+1}. 
\end{equation}
We will look at the finite time global error
\begin{equation} \label{eq:error_def}
e_{h}(T):
=
\norm{
\left[
\begin{array}{c}
p(T)\\
q(T)
\end{array}
\right]
-
\left[
\begin{array}{c}
p_{n}\\
q_{n}
\end{array}
\right]
}
\end{equation}
for $T=nh$. Here, and throughout, 
$\norm{\cdot}$ denotes the Euclidean norm. 
As we can see in Figure~\ref{fig:simple_error}, by plotting $e_{h}(T)$ versus $h$, both numerical methods are first 
order\footnote{Here we have chosen $T=15,p(0)=1, q(0)=0$.}. Furthermore, we observe that the error of symplectic Euler is much smaller than that of standard Euler. 
\begin{figure}[h]
\centering
\begin{subfigure}{.45\textwidth}
  \centering
  \includegraphics[width=\linewidth]{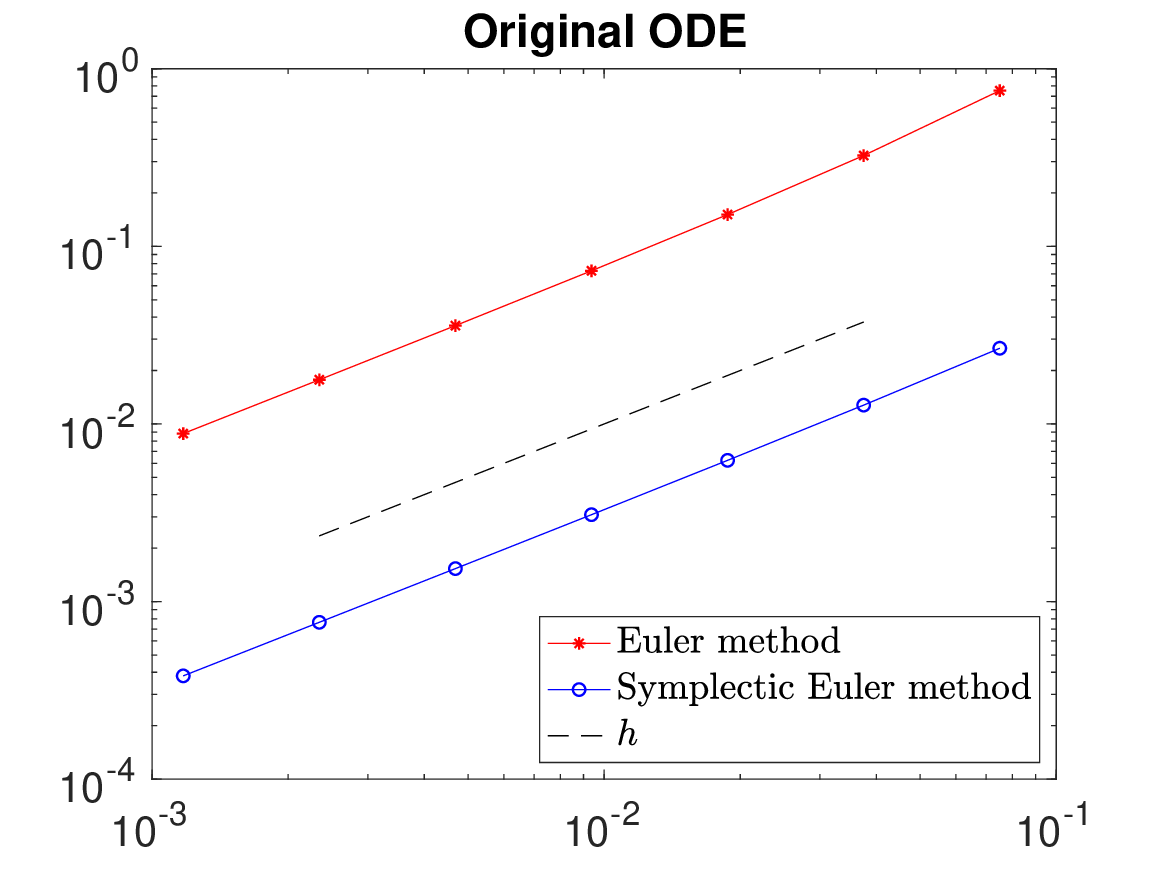}
  \caption{$e_{h}(T)$ vs $h$}
  \label{fig:simple_error}
\end{subfigure}
\begin{subfigure}{.45\textwidth}
  \centering
  \includegraphics[width=\linewidth]{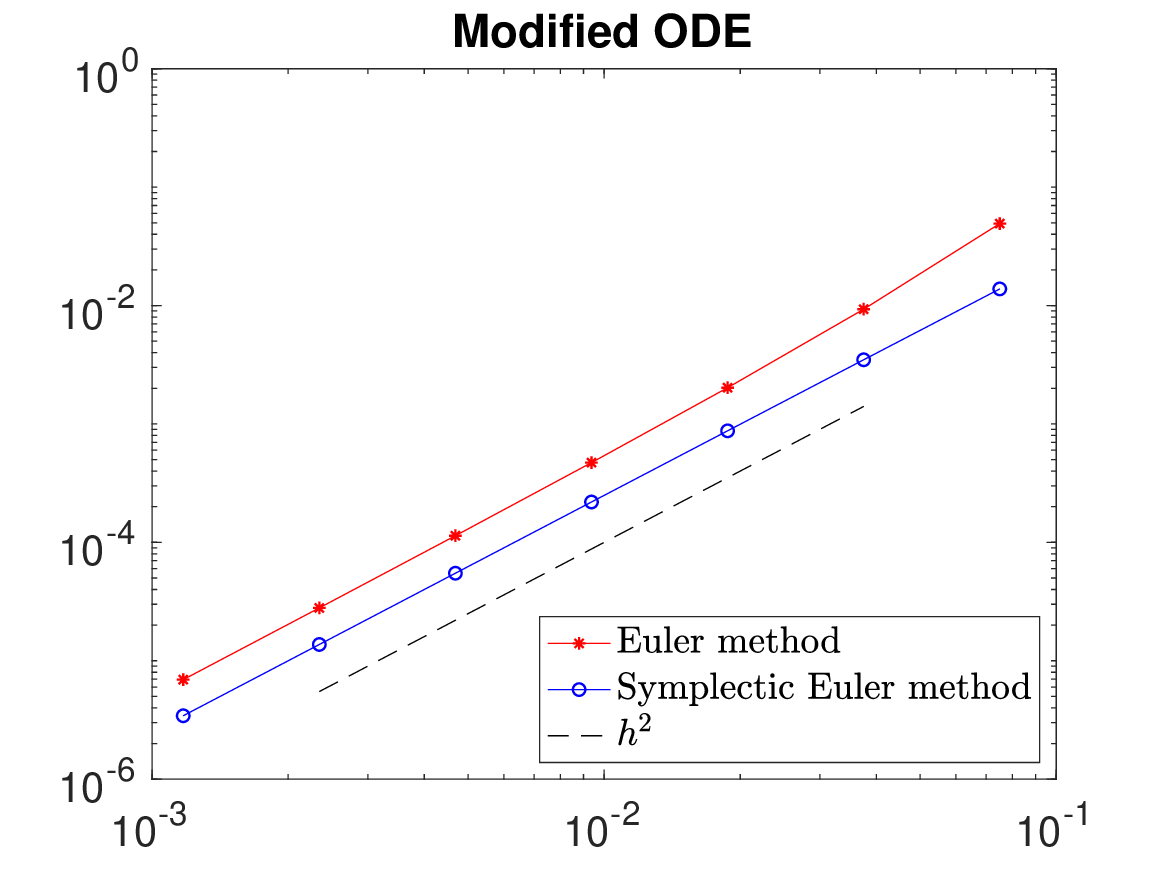}
  \caption{$e_{h}(T)$ vs $h$}
  \label{fig:mod_error}
\end{subfigure}
\caption{(a) Finite time global error $e_{h}(T)$ versus $h$  where $(p(T),q(T))$ is the exact solution of \eqref{eq:harmonic}. (b)  Finite time global error $e_{h}(T)$ versus $h$  where $(p(T),q(T))$ is the exact solution of the modified equation \eqref{eq:mod_euler1} in the case of Euler method and  \eqref{eq:mod_euler2} in the case of the symplectic Euler method.} 
\label{fig:double_error}
\end{figure}
This is further illustrated in Figure~\ref{fig:visual}
\begin{figure}[h]
    \centering
    \includegraphics[scale=0.5]{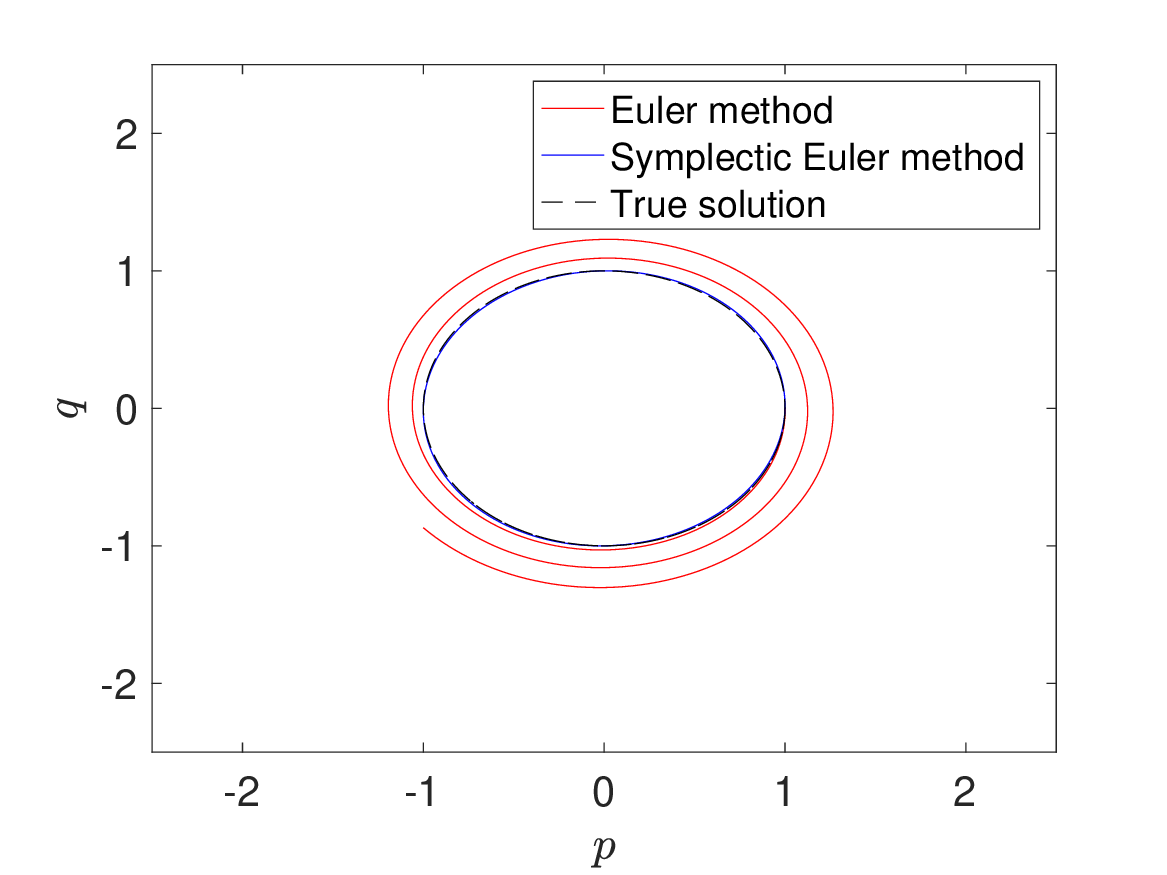}
    \caption{Comparison of the numerical solution using Euler and symplectic Euler method for $h=0.0375$ against the exact solution of \eqref{eq:harmonic}. Here we have taken $T=15$.}
    \label{fig:visual}
\end{figure}
where we plot numerical trajectories given by the Euler and symplectic Euler methods using a time-step $h=0.0375$ up to time $T=15$. We see 
that the Euler method fails to reflect the conservative nature of the true solution, unlike the symplectic Euler method. 

A modified equation associated with the Euler method is 
\begin{equation} \label{eq:mod_euler1}
\frac{dp}{dt} =q+\frac{1}{2}hp, \quad \frac{dq}{dt} =-p+\frac{1}{2}q,
\end{equation}
while a modified equation associated with the symplectic Euler method is 
\begin{equation} \label{eq:mod_euler2}
\frac{dp}{dt} =q-\frac{1}{2}hp, \quad \frac{dq}{dt} =-p+\frac{1}{2}hq;
\end{equation}
see 
\cite{GH10,HALUWA,SSMC18}  for derivations.
In Figure  \ref{fig:mod_error} we plot the global error $e_{h}(T)$ as a function of the time-step $h$, with the true solution
based on \eqref{eq:mod_euler1} for the Euler method and \eqref{eq:mod_euler2} for the symplectic Euler method. 
The results are consistent with what is predicted by the theory since the numerical methods approximate 
the original equation \eqref{eq:harmonic} to order $1$
and 
their corresponding modified equation to order $2$.

A closer look at \eqref{eq:mod_euler1} and \eqref{eq:mod_euler2} can shed some light on the discrepancy between the two numerical solutions observed in Figure \ref{fig:visual}. Recall that in the original problem \eqref{eq:harmonic} the Hamiltonian function $H(p,q)=\frac{1}{2}p^{2}+\frac{1}{2}q^{2}$ is preserved. The rate of change of $H(p,q)$ for the modified equation of the Euler method satisfies  
\[
\frac{dH}{dt}=\frac{1}{2}h H,
\]
implying that no matter how small the time-step $h$ is taken, over long periods of integration time there will be exponential growth in the values of the Hamiltonian; thus explaining the spiraling observed for the Euler method in Figure \ref{fig:visual}. Repeating 
this calculation for the modified equation associated with the symplectic Euler method we find a rate of change of $H$ equal to zero; implying that up to order $h^{2}$ symplectic Euler conserves the true Hamiltonian and explaining the very small differences between the true solution and the symplectic Euler method observed in Figures~\ref{fig:simple_error} and \ref{fig:visual}.

This example gives an indication of the effectiveness of backward error analysis. We note that a 
textbook Gronwall-based numerical analysis would not differentiate between Euler and symplectic Euler, viewing them both as first-order methods. However, the corresponding modified equations 
help us to understand their qualitative differences. 
We refer to \cite{CMS94,GSH99,GSS86,HALUWA} for much more detail on the insights that backward error analysis can provide.

\subsection{Backward Error Analysis and Stochastic Differential Equations}
Consider the SDE 
\begin{equation} \label{eq:SDE}
dX=f(X)dt+g(X)dW,   
\end{equation}
where $f: \mathbb{R}^{d} \mapsto \mathbb{R}^{d}, g: \mathbb{R}^{d\times m} \mapsto \mathbb{R}^{d}$ and $W(t)$ is the standard $m$-dimensional Brownian motion. In addition consider the numerical approximation of \eqref{eq:SDE}
 by a one-step numerical integrator at time
$t=nh$ of the form
\begin{equation} \label{eq:SDE_num}
X_{n+1}=\Psi(X_{n},h,\xi_{n}),    
\end{equation}
where $h$ denotes the stepsize and $\xi_{n}$ are independent random vectors. In choosing an appropriate method \eqref{eq:SDE_num} to approximate \eqref{eq:SDE}, we must first decide how error is to be measured. For the approximation of individual trajectories one is interested in strong convergence, while for the approximation of the expectation of functionals of the solution, one is interested in  weak convergence \cite{HK21,KlPl,MT}.  

Following on from the ODE case, the objective of backward error analysis is to find an SDE of the form 
\begin{equation} \label{eq:SDE_mod}
d\tilde{X}=f_{h}(\tilde{X})dt+g_{h}(\tilde{X})dW   
\end{equation}
that the numerical method \eqref{eq:SDE_num} approximates more closely in a weak or strong sense. 
While there has been some work on trying to specify $f_{h}$ and $g_{h}$ in the case of strong convergence, the majority of the backward error analysis literature for SDEs concentrates on weak convergence, and this is what we focus on here. Denote the expectation of functionals of the solution of \eqref{eq:SDE} after one timestep by 
\begin{equation} 
u(x,h)=\mathbb{E}(\phi(X(h))|x(0)=x)    
\end{equation}
and the corresponding numerical one-step expectation by  
\begin{equation} 
U(x,h)=\mathbb{E}(\phi(x_{1})|x_0=x).    
\end{equation}
If the numerical method is of weak order $p$ \cite{KlPl,S06} then the local error satisfies
\[
u(x,h)-U(x,h)=\mathcal{O}(h^{p+1}).
\]
Hence in order to find an SDE of the form \eqref{eq:SDE_mod} that approximates the  numerical method more closely, we require
\[
\tilde{u}(x,h)-U(x,h)=\mathcal{O}(h^{p+s}),
\]
where $\tilde{u}(x,h)$ denotes the one-step expectation of the modified SDE and $s>0$. 
As with ODEs, the most important value of $s$ is $s=1$, but, unlike the ODE setting, there is not always a solution even for this value of $s$ \cite{KZ11}. 

\subsubsection{First Modified Equation (the case $s=1$)}
As discussed above we may derive modified equations for a stochastic numerical integrator by analyzing the one-step weak local error. We will do this by using an expansion in terms of the time-step $h$. In particular, the following expansion holds for $u(x,h)$
\begin{equation} \label{eq:true_expan}
u(x,h) = \phi(x)+h\mathcal{L}\phi+\frac{h^{2}}{2}\mathcal{L}^{2}\phi+\cdots
\end{equation}
where 
\[
\mathcal{L}:= f(x)\cdot \nabla \phi +\frac{1}{2}gg^{T} : \nabla \nabla \phi
\]
is the generator of the Markov process associated with the solution of \eqref{eq:SDE}. Here $A:B$ denotes the Frobenious inner product between two matrices. For simplicity of presentation we have not specified conditions on $f$ and $g$ that allow \eqref{eq:true_expan} to be made rigorous; these can be found in \cite{DF12}. We will assume in addition that $U(x,h)$ admits the expansion
\[
U(x,h)=\phi(x)+h\mathcal{A}_{0}\phi+h^{2}\mathcal{A}_{1}\phi+\cdots,
\]
where $\mathcal{A}_{0}, \mathcal{A}_{1}$ are partial differential operators that depend on higher order derivatives of $f$ and $g$. A necessary condition for the numerical integrator to be of weak order $1$ is for the weak local error to be of order $2$ which implies 
$\mathcal{A}_{0}=\mathcal{L}$. We now want to find an SDE that our numerical integrator approximates with weak order 2. 
Writing \eqref{eq:SDE_mod} as 
\begin{equation} \label{eq:SDE_mod1}
d\tilde{X}=(f(\tilde{X})+hf_{1}(\tilde{X}))dt+(g(\tilde{X})+hg_{1}(\tilde{X}))dW
\end{equation}
and expanding $\tilde{u}(x,h)$, it may be shown \cite{KZ11} that the condition 
\[
\mathcal{L}_{1}\phi=\mathcal{A}_{1}\phi-\frac{1}{2}\mathcal{L}^{2}\phi, \quad \text{where} \quad \mathcal{L}_{1}:=f_{1}\cdot \nabla +\frac{1}{2}\left(gg^{T}_{1}+g_{1}g^{T} \right): \nabla \nabla,
\]
ensures that  $\tilde{u}(x,h)-U(x,h)=\mathcal{O}(h^{3})$, which in turn implies that the weak  error between \eqref{eq:SDE_mod1} and the numerical integrator is order $2$.

\subsubsection{Application to Ornstein–Uhlenbeck Process}

We now discuss a specific application of backward error analysis in the case of the one dimensional Ornstein--Uhlenbeck process
\begin{equation} \label{eq:OU}
 dX=-\gamma X dt +\sigma dW.   
\end{equation}
We consider two different numerical methods: Euler--Maruyama  
\begin{equation} \label{eq:EM}
x_{n+1}=x_{n}-\gamma h x_{n}+\sigma \sqrt{h} \, \xi_{n},    
\end{equation}
and implicit Euler
\begin{equation} \label{eq:iEM}
x_{n+1}=x_{n}-\gamma h x_{n+1}+\sigma \sqrt{h} \, \xi_{n}.    
\end{equation}
In both cases the $\xi_{n}$ are i.i.d.\ standard Gaussians. 
In Figure~\ref{fig:simple_error1_s} 
we plot
\[
e_{h}(T,\phi)=|\mathbb{E}(\phi(X(T)))-\mathbb{E}\phi(x_{n})|,
\]
for $T=n h$ and $\phi(x)=x^{2}$. As we can see\footnote{Here we have used $T-1, \gamma=1, \sigma=10^{-1}$ and $X(0)=10$.} both numerical methods are first order in the weak sense. 
\begin{figure}[h]
\centering
\begin{subfigure}{.45\textwidth}
  \centering
  \includegraphics[width=\linewidth]{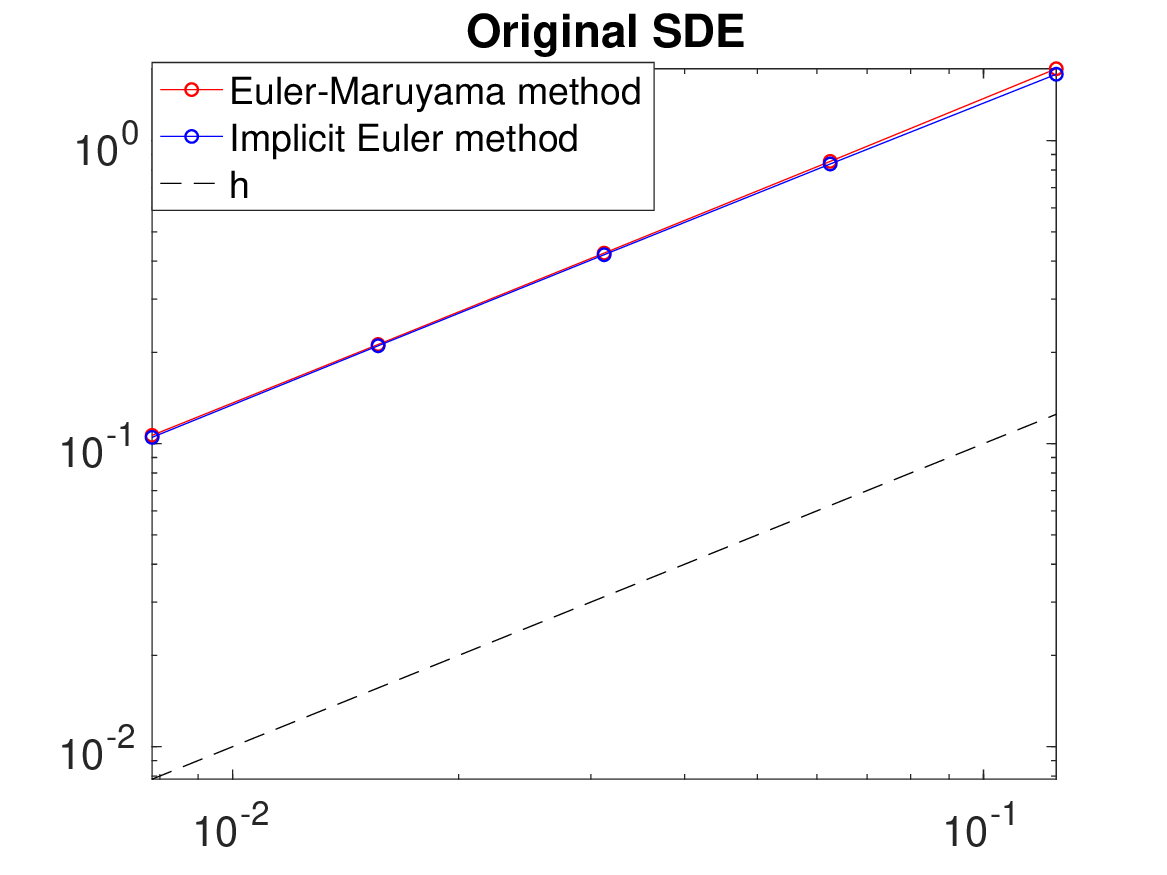}
  \caption{$e_{h}(T,\phi)$ vs $h$}
  \label{fig:simple_error1_s}
\end{subfigure}
\begin{subfigure}{.45\textwidth}
  \centering
  \includegraphics[width=\linewidth]{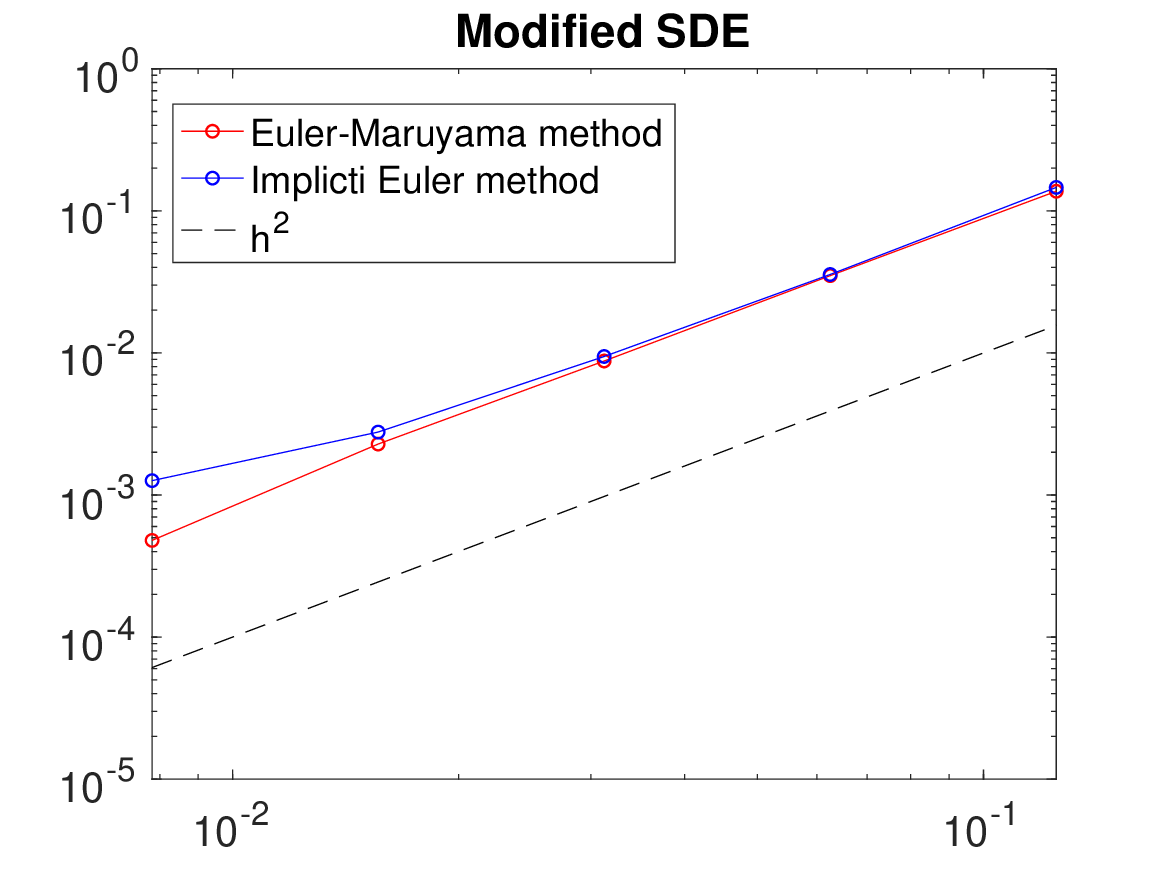}
  \caption{$e_{h}(T,\phi)$ vs $h$}
  \label{fig:mod_error1_s}
\end{subfigure}
\caption{(a) Error $e_{h}(T,\phi)$ versus $h$, where $X(t)$ is the exact solution of \eqref{eq:OU} (b) Error $e_{h}(T,\phi)$ versus $h$, where $(p(T),q(T))$ is the exact solution of \eqref{eq:mod_euler1_s} (Euler method) or \eqref{eq:mod_euler2_s} (symplectic Euler method).} 
\label{fig:double_error_s}
\end{figure}

The associated first modified equations are
\begin{equation} \label{eq:mod_euler1_s}
d\tilde{X}=-\left(\gamma+\frac{h}{2}\gamma^{2}\right)\tilde{X}+ \sigma\left(1+\frac{\gamma h}{2}\right)dW,   
\end{equation}
for Euler--Maruyama and 
\begin{equation} \label{eq:mod_euler2_s}
d\tilde{X}=-\left(\gamma-\frac{h}{2}\gamma^{2}\right)\tilde{X}+ \sigma\left(1+\frac{\gamma h}{2}\right)dW.   
\end{equation}
for implicit Euler, \cite{KZ11}.
In Figure \ref{fig:mod_error1_s} we plot again $e_{h}(T,\phi)$ as a function of the time-step $h$, but
to form the true solution we use \eqref{eq:mod_euler1_s} for the Euler-Maruyama method and 
\eqref{eq:mod_euler2_s} for the implicit Euler method.
The results are consistent with weak order equal to $2$.

\section{Stochastic Optimization Algorithms}
\label{sec:opt}
Given an objective function $F: \mathbb{R}^d \mapsto \mathbb{R}$, we now 
consider the unconstrained optimization problem 
\begin{equation*} 
\displaystyle\min_{x\in \mathbb{R}^d} F(x).
\end{equation*}
A natural iteration in the case where $F$ is differentiable is gradient descent
\begin{equation} \label{eq:GD}
x_{n+1}=x_{n}-h \nabla F(x_{n}),
\end{equation}
which can be seen as an Euler discretization of the ODE
\begin{equation} \label{eq:GF}
\frac{dX}{dt}=-\nabla F(X).    
\end{equation}
In many applications of interest calculating the full gradient of $F$ can be computationally prohibitive, so 
 in \eqref{eq:GD} one replaces $\nabla F(x)$ with an unbiased estimator $\widehat{\nabla} F(x,w)$, where $w$ is an appropriately chosen random variable such that $\mathbb{E}(\widehat{\nabla} F(x,w))=\nabla F(x)$. In this case \eqref{eq:GD} becomes
\begin{equation} \label{eq:general_opt}
x_{n+1}=x_{n}-h \, \widehat{\nabla} F(x_{n},w).     
\end{equation}
This framework covers a wide class of stochastic optimization algorithms \cite{WR22}. We will focus here on stochastic gradient descent and stochastic coordinate descent.
\subsection{Stochastic Gradient Descent}
In most machine learning applications, the function $F$ is of the additive form 
\begin{equation} \label{eq:f_1}
F(X):=\displaystyle\frac{1}{N}\displaystyle\sum_{i=1}^{N}F_i(X),     
\end{equation}
where $F,F_i : \mathbb{R}^d \mapsto \mathbb{R}, \ i=1,\dots,N$, with $N$ representing, for example, the number of 
training points in a supervised learning task. 
Hence calculating the gradient of $F$ involves a summation of $N$ terms, which can be expensive when $N$ is large. 
A typical estimator of $\nabla F$ that is used in the case is 
\begin{equation} \label{eq:est1}
\widehat{\nabla} F(x,w) = \frac{m}{N}\sum_{i=1}^{m} \nabla F_{w_{i}}(x),
\end{equation}
where $w=(w_{1},\cdots,w_{m})$ is a random subset of $[N]=\{1,\cdots,N\}$ of size $m$, generated for example by sampling with or without replacement from $[N]$. 
Intuitively, this approach replaces the full training set by a randomly chosen subset, using a different subset on each iteration.
In this case it is not difficult to show that $\widehat{\nabla} F(x,w)$ is indeed an unbiased estimator  of $\nabla F$. Using \eqref{eq:est1} we have the iteration
\begin{equation} \label{eq:sgd}
x_{n+1}=x_{n}-\frac{h \, m}{N}\sum_{i=1}^{m} \nabla F_{w_{i}}(x_{n}),
\end{equation}
which is known as stochastic gradient descent \cite{RM51}. To analyze the behaviour of this algorithm, assumptions must be placed on 
the functions $F_{i}$. In certain cases, it is possible to show that with a suitable iteration-dependent diminishing step size sequence, $h = h_{n}$, the algorithm will converge to a minimizer of $F$ \cite{RM51,BCN18,GLQ19,WR22}. Nevertheless, it is also of interest to understand how the algorithm behaves for fixed step size $h$, and there is indeed a wide range of literature investigating this question; see, for example, 
\cite{DJM21,L21,LTE19}.


\subsection{Stochastic Coordinate Descent and Variants}
Another case of interest in large-scale optimization arises when the dimension $d$ is very large. Since 
\begin{equation} \label{eq:scd_exp}
\nabla F =  \sum_{i=1}^{d} \frac{\partial F}{\partial x_{i}}e_{i}:=\sum_{i=1}^{d} (\nabla_{i} F) e_{i}, 
\end{equation}
where $e_{i}$ are the standard orthonormal vectors in $\mathbb{R}^{d}$, calculating the gradient in all directions when $d$ is large can be expensive. A widely used estimator of $\nabla F$ in this case is
\begin{equation} \label{eq:est2}
\widehat{\nabla} F(x,w) = \frac{d}{m}\sum_{i=1}^{m} (\nabla_{w_{i}} F(x))e_{w_{i}},    \end{equation}
where $w=(w_{1},\cdots,w_{m})$ is a random subset  of $[d]=\{1,\cdots,d\}$ of size $m$ generated for example by sampling with or without replacement from $[d]$. 
Intuitively, we are using gradient information from a randomly chosen subset of the coordinate directions.
Using \eqref{eq:est2} we have the following iteration 
\begin{equation*} 
x_{n+1}=x_{n}-\frac{d}{m} h \sum_{i=1}^{m} (\nabla_{w_{i}} F(x_{n}))e_{w_{i}}
\end{equation*}
which is known as stochastic coordinate descent \cite{N12}.

In practice, one might have more information about the properties of the gradients in different directions. In particular, one can try to exploit the fact that the Lipschitz constant $L_{i}$ of $\nabla_{i}F$ might be different for each $i$.  In this scenario, 
we may take $m=1$ and use the step-size $h=\frac{1}{dL_{i}}$, giving rise to the following iteration \cite{RT14}:
\[ 
x_{n+1}=x_{n}-\frac{1}{L_{w_{1}}} \nabla_{w_{1}} F(x_{n}), \] 
where $w_{1}$ follows the uniform distribution on the set of integers $[d]$. It is also worth mentioning that one can group different directions together in \eqref{eq:scd_exp}  and use a similar estimator to \eqref{eq:est2} to give rise to what is known as block stochastic coordinate descent \cite{RT14}.


\section{Backward Error Analysis for Stochastic Optimization Algorithms}
\label{sec:main}
Our starting point in this section is the general form of stochastic optimization algorithms \eqref{eq:general_opt}. In particular, it is not difficult to show that if $\widehat{\nabla} F(x,w)$ is an unbiased estimator of $\nabla F$ then \eqref{eq:general_opt} approximates weakly to first order the solution of the ODE \eqref{eq:GF}. 
Regarding \eqref{eq:general_opt} as a time-stepping method, 
our objective is to find a modified equation that it approximates weakly to second-order. We will look for an SDE of the form 
\begin{equation}
\label{eq:gen_mod}
d\tilde{X}=\left( -\nabla F(\tilde{X})+h F_{1}(\tilde{X}) \right)dt+\sqrt{h} \, G_{1}(\tilde{X})dW
\end{equation}
and seek $F_{1}$ and $G_{1}$ such that the one-step expansion $u(x,h)=\mathbb{E}(\phi(x_{1})|x_{0}=x)$ is $O(h^{3})$ from $\tilde{U}(x,h)=\mathbb{E}(\phi(\tilde{X}(h))|x_{0}=x)$. Since the numerical solution \eqref{eq:general_opt} is a first-order weak approximation to \eqref{eq:GF} it admits the following expansion\footnote{details of this calculation can be found in the Appendix.} 
\[
u(x,h)=\phi(x)+h \mathcal{L}\phi(x)+h^{2}\mathcal{A}_{1}\phi(x)+\cdots 
\]
where 
\begin{subequations}
\begin{eqnarray}
{\label{eq:expa}}
\mathcal{L} \phi &=&-\nabla F \cdot \nabla \phi, \\
{\label{eq:expb}}
\mathcal{A}_{1} \phi &=& \frac{1}{2}\mathbb{E}(\widehat{\nabla}F(x,w)(\widehat{\nabla}F(x,w))^{T}):\nabla \nabla \phi. 
\end{eqnarray}    
\end{subequations}
Furthermore, similar to the case of backward error analysis for SDEs, we have that
\[
\mathcal{L}_{1}\phi=\mathcal{A}_{1}\phi-\frac{1}{2}\mathcal{L}^{2}\phi, \quad  \text{where} \quad \mathcal{L}_{1}=F_{1} \cdot \nabla+\frac{1}{2}G_{1}G^{T}_{1}:\nabla \nabla, 
\]
in order for the local weak error between the stochastic optimization scheme \eqref{eq:general_opt} and the true solution of the modified equation to be $\mathcal{O}(h^{3})$.  By equating $\nabla \phi$ terms we find that 
\[
F_{1}=-\frac{1}{2}(\nabla \nabla F)\nabla F=-\frac{1}{4}\nabla \norm{\nabla F}^{2}.
\]
By equating $\nabla \nabla \phi$ terms we find that 
\[
G_{1}=\sqrt{\mathbb{E}[(\widehat{\nabla} F -\nabla F)(\widehat{\nabla} F -\nabla F)^{T}]},
\]
where $\sqrt{\cdot}$ denotes the principal matrix square root. We hence have the following proposition.
\begin{proposition}
Let $x_{n}$ be the iterates from a stochastic optimization scheme of the form \eqref{eq:general_opt}, where $\mathbb{E}(\widehat{\nabla} F(x,w))=\nabla F(x)$. In addition, consider the solution to the equation
\begin{equation} \label{eq:pre_mod}
d\tilde{X}=-\nabla \left(F(\tilde{X})+\frac{h}{4}\norm{\nabla F(\tilde{X})}^{2}\right)dt+\sqrt{h}\sqrt{\Sigma(\tilde{X})}dW, 
\end{equation}
where 
\begin{equation}
\Sigma(\tilde{X})=\mathbb{E}[(\widehat{\nabla} F (\tilde{X},w)-\nabla F(\tilde{X}))(\widehat{\nabla} F(\tilde{X},w) -\nabla F(\tilde{X}))^{T}].
\label{eq:Sigmadef}
\end{equation}
Then
\begin{equation*}
\mathbb{E}(\phi(\tilde{X}(T)))-\mathbb{E}(\phi(x_{n}))=\mathcal{O}(h^{2}).
\end{equation*}
\end{proposition}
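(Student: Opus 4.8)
My proof would follow the weak backward-error template recalled in Section~\ref{sec:pre}: expand the one-step expectations of both the scheme $x\mapsto x-h\widehat{\nabla}F(x,w)$ and of the candidate modified SDE \eqref{eq:pre_mod} in powers of $h$, force them to agree through the $h^{2}$ term, and then upgrade the resulting $\mathcal{O}(h^{3})$ one-step (local) weak error to a global $\mathcal{O}(h^{2})$ bound by a standard telescoping argument. The coefficient-matching part is essentially the calculation that was already sketched in the paragraphs preceding the proposition, so the proof is mostly a matter of organizing it and then invoking a known local-to-global theorem.

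First I would record the one-step expansion of the scheme. Writing $Y=\widehat{\nabla}F(x,w)$, a Taylor expansion of $\phi(x-hY)$ in powers of $h$ followed by taking expectations and using $\mathbb{E}(Y)=\nabla F(x)$ gives
\[
u(x,h)=\phi(x)+h\,\mathcal{L}\phi(x)+h^{2}\,\mathcal{A}_{1}\phi(x)+\mathcal{O}(h^{3}),
\]
with $\mathcal{L}\phi=-\nabla F\cdot\nabla\phi$ and $\mathcal{A}_{1}\phi=\tfrac12\mathbb{E}[YY^{T}]:\nabla\nabla\phi$ exactly as in \eqref{eq:expa}--\eqref{eq:expb} (this is the Appendix computation already referenced, and the $\mathcal{O}(h^{3})$ remainder is controlled by the finite third moments of $\widehat{\nabla}F$ and bounded derivatives of $\phi$). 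Next I would expand the one-step expectation $\tilde{U}(x,h)=\mathbb{E}(\phi(\tilde{X}(h))\mid\tilde{X}(0)=x)$ for \eqref{eq:pre_mod}. Its generator is $\tilde{\mathcal{L}}=\mathcal{L}+h\mathcal{L}_{1}$ with $\mathcal{L}_{1}=F_{1}\cdot\nabla+\tfrac12\Sigma:\nabla\nabla$ and $F_{1}=-\tfrac12(\nabla\nabla F)\nabla F$, so the Itô--Taylor expansion gives
\[
\tilde{U}(x,h)=\phi+h\,\tilde{\mathcal{L}}\phi+\tfrac{h^{2}}{2}\tilde{\mathcal{L}}^{2}\phi+\mathcal{O}(h^{3})=\phi+h\,\mathcal{L}\phi+h^{2}\bigl(\mathcal{L}_{1}\phi+\tfrac12\mathcal{L}^{2}\phi\bigr)+\mathcal{O}(h^{3}),
\]
using $\tilde{\mathcal{L}}^{2}=\mathcal{L}^{2}+\mathcal{O}(h)$.

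The core of the argument is then the identity $\mathcal{A}_{1}\phi=\mathcal{L}_{1}\phi+\tfrac12\mathcal{L}^{2}\phi$, which makes $u$ and $\tilde{U}$ agree up to $\mathcal{O}(h^{3})$. To verify it I would compute $\mathcal{L}^{2}\phi=\nabla F\cdot\nabla(\nabla F\cdot\nabla\phi)=\bigl((\nabla\nabla F)\nabla F\bigr)\cdot\nabla\phi+(\nabla F\,\nabla F^{T}):\nabla\nabla\phi$, so that
\[
\mathcal{A}_{1}\phi-\tfrac12\mathcal{L}^{2}\phi=-\tfrac12\bigl((\nabla\nabla F)\nabla F\bigr)\cdot\nabla\phi+\tfrac12\Bigl(\mathbb{E}[YY^{T}]-\nabla F\,\nabla F^{T}\Bigr):\nabla\nabla\phi.
\]
Unbiasedness gives $\mathbb{E}[YY^{T}]-\nabla F\,\nabla F^{T}=\Sigma(x)$ as in \eqref{eq:Sigmadef}, and $\nabla\norm{\nabla F}^{2}=2(\nabla\nabla F)\nabla F$, so the right-hand side is $F_{1}\cdot\nabla\phi+\tfrac12\Sigma:\nabla\nabla\phi=\mathcal{L}_{1}\phi$. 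This is precisely what forces the drift correction $-\tfrac{h}{4}\nabla\norm{\nabla F}^{2}$ and the diffusion coefficient $\sqrt{h}\,\sqrt{\Sigma}$ in \eqref{eq:pre_mod}; note that $\Sigma(x)$ is symmetric positive semidefinite as a covariance matrix, so its principal square root is well defined.

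Finally, with the local weak error pinned down as $\mathcal{O}(h^{3})$, I would pass to the global statement on a fixed interval $[0,T]$ with $T=nh$ by the usual one-step-to-many-steps argument: write the global error $\mathbb{E}(\phi(\tilde{X}(T)))-\mathbb{E}(\phi(x_{n}))$ as a telescoping sum over the $n$ steps of local errors propagated by the (smooth) semigroup of \eqref{eq:pre_mod}, and bound each summand using regularity of $\phi$ together with uniform-in-$h$ moment estimates for the scheme and for the modified SDE, giving $n\cdot\mathcal{O}(h^{3})=\mathcal{O}(h^{2})$. The main obstacle I anticipate is not the coefficient matching, which is a finite computation, but making this last step fully rigorous: it needs enough smoothness of $F$, $\widehat{\nabla}F$ and $\phi$, moment bounds, and well-posedness of \eqref{eq:pre_mod} — in particular the map $x\mapsto\sqrt{\Sigma(x)}$ need not inherit Lipschitz regularity from $\Sigma$ without extra structure (e.g.\ uniform positive definiteness of $\Sigma$). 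For the purposes here I would leave these hypotheses implicit, exactly as is done for the SDE backward-error results quoted from \cite{KZ11,DF12}.
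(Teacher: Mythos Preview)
Your proposal is correct and follows essentially the same route as the paper: the coefficient-matching you carry out---expanding the scheme via Taylor's theorem to obtain \eqref{eq:expa}--\eqref{eq:expb}, expanding the modified SDE via its generator $\tilde{\mathcal{L}}=\mathcal{L}+h\mathcal{L}_{1}$, and then verifying $\mathcal{A}_{1}\phi-\tfrac12\mathcal{L}^{2}\phi=\mathcal{L}_{1}\phi$ by computing $\mathcal{L}^{2}\phi$ explicitly---is exactly the derivation sketched in the paragraphs preceding the proposition and fleshed out in the Appendix. If anything your write-up is more complete than the paper's, since you spell out the $\mathcal{L}^{2}\phi$ calculation, the covariance identity $\mathbb{E}[YY^{T}]-\nabla F\nabla F^{T}=\Sigma$, and the local-to-global telescoping step (together with its regularity caveats on $\sqrt{\Sigma}$), all of which the paper leaves implicit.
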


\begin{remark}
Equation \eqref{eq:pre_mod} coincides with the modified equation that appeared in \cite{LTE19} in the case of stochastic gradient descent, i.e., when \eqref{eq:est1} holds.     
\end{remark}

\subsection{Application to Stochastic Coordinate Descent}
We now study the properties of stochastic coordinate descent in the case where $m=1$ in \eqref{eq:est2}. In this case, the iteration becomes
\[
x_{n+1}=x_{n}-d \, h \, (\nabla_{w_{1}}F(x_{n}))e_{w_{1}},    
\]
which can also be rewritten using the full gradient of $f$ in the following way
\begin{equation} \label{eq:alg1}
x_{n+1}=x_{n}-d \, h \, U_{w_{1}}\nabla F(x_{n}),   
\end{equation}
where $w_{1}$ is a random variable following the uniform distribution on the set of integers $[d]$ and $U_{w_{1}}$ is a matrix that has 
$w_{1}$th column given by $e_{w_{1}}$ and is zero elsewhere. We may then state a proposition about the form  of the matrix $\Sigma(\tilde{X})$
in \eqref{eq:Sigmadef}.
 \begin{proposition}
\label{prop:sigma1}
For the stochastic estimator \eqref{eq:est2} with $m=1$, the matrix $\Sigma(\tilde{X})$
in \eqref{eq:Sigmadef} may be written 
\begin{equation*}
\Sigma(\tilde{X})= d \sum_{i=1}^{d}U_{i}(\nabla F(\tilde{X})) (\nabla F(\tilde{X}))^{T}U^{T}_{i}-(\nabla F(\tilde{X}))(\nabla F(\tilde{X}))^{T}.
\end{equation*}
\end{proposition}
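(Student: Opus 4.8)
The plan is to compute $\Sigma(\tilde X)$ directly from its definition \eqref{eq:Sigmadef} by exploiting the fact that, for $m=1$, the estimator \eqref{eq:est2} is determined by a single uniform random index $w_1 \in [d]$. First I would fix $\tilde X$ and abbreviate $v = \nabla F(\tilde X)$, so that \eqref{eq:alg1} gives $\widehat{\nabla} F(\tilde X, w_1) = d\, U_{w_1} v$. Since $U_i v = (\nabla_i F(\tilde X)) e_i$, one checks that $\mathbb{E}(d\, U_{w_1} v) = \frac{d}{d}\sum_{i=1}^d U_i v = \sum_{i=1}^d (\nabla_i F) e_i = v$, confirming unbiasedness and hence that $\Sigma(\tilde X) = \mathbb{E}[\widehat{\nabla} F\, \widehat{\nabla} F^T] - v v^T$ (the cross terms in the expansion of the centered product cancel against $vv^T$).

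Next I would evaluate the second-moment term. Because $w_1$ is uniform on $[d]$,
\[
\mathbb{E}\bigl[\widehat{\nabla} F(\tilde X, w_1)(\widehat{\nabla} F(\tilde X, w_1))^T\bigr]
= \mathbb{E}\bigl[d^2\, U_{w_1} v v^T U_{w_1}^T\bigr]
= d^2 \cdot \frac{1}{d}\sum_{i=1}^d U_i v v^T U_i^T
= d \sum_{i=1}^d U_i v v^T U_i^T.
\]
Substituting into $\Sigma(\tilde X) = \mathbb{E}[\widehat{\nabla} F\, \widehat{\nabla} F^T] - v v^T$ yields exactly the claimed formula. The only facts used are the linearity of expectation, the uniform distribution of $w_1$, and the identity $\sum_i U_i v = v$, all of which follow from \eqref{eq:est2}, \eqref{eq:scd_exp}, and the definition of $U_i$.

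There is no serious obstacle here; the statement is essentially a bookkeeping computation. The one point that warrants a line of care is justifying that the centered covariance reduces to "second moment minus $vv^T$": expanding $(\widehat{\nabla} F - v)(\widehat{\nabla} F - v)^T = \widehat{\nabla} F\,\widehat{\nabla} F^T - \widehat{\nabla} F\, v^T - v\,\widehat{\nabla} F^T + vv^T$ and taking expectations, the two middle terms each contribute $-vv^T$ by unbiasedness, leaving $\mathbb{E}[\widehat{\nabla} F\,\widehat{\nabla} F^T] - vv^T$. A secondary point worth noting explicitly is that the matrices $U_i$ satisfy $U_i v v^T U_i^T = (\nabla_i F)^2 e_i e_i^T$, so the sum $d\sum_i U_i v v^T U_i^T$ is the diagonal matrix with entries $d(\nabla_i F)^2$; this makes the resulting $\Sigma(\tilde X)$ transparent and will be convenient for the subsequent mean-square stability analysis, though it is not needed to prove the proposition as stated.
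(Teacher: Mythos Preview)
Your proposal is correct and follows essentially the same route as the paper: expand the centered covariance using unbiasedness to get $\mathbb{E}[\widehat{\nabla}F\,\widehat{\nabla}F^{T}]-\nabla F\,\nabla F^{T}$, then evaluate the second moment via $\widehat{\nabla}F=d\,U_{w_1}\nabla F$ and the uniform distribution of $w_1$. Your write-up is in fact slightly more detailed than the paper's, since you spell out the unbiasedness check and the cross-term cancellation explicitly.
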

\begin{proof}
We have  
\begin{align*}
\Sigma(\tilde{X}) &=\mathbb{E}[(\widehat{\nabla} F (\tilde{X},w)-\nabla F(\tilde{X}))(\widehat{\nabla} F(\tilde{X},w) -\nabla F(\tilde{X}))^{T}] \\
&=\mathbb{E}[(\widehat{\nabla} F(\tilde{X},w))(\widehat{\nabla} F(\tilde{X},w))^{T}]-(\nabla F(\tilde{X}))(\nabla F(\tilde{X}))^{T}\\
&=d^{2}\mathbb{E}[U_{w_{1}}(\nabla F(\tilde{X},w)))(\nabla F(\tilde{X},w))^{T}U^{T}_{w_{1}}]-(\nabla F(\tilde{X}))(\nabla F(\tilde{X}))^{T} \\
&=d \sum_{i=1}^{d}U_{i}(\nabla F(\tilde{X})) (\nabla F(\tilde{X}))^{T}U^{T}_{i}-(\nabla F(\tilde{X}))(\nabla F(\tilde{X}))^{T},
\end{align*}
where in the last line we have used the fact the $W_{1}$ is random variable with a uniform distribution on $[1]$
(since the random index $i$ is chosen uniformly at random from the set $[d]$).
\end{proof}

Having calculated an expression for $\Sigma(\tilde{X})$ we may now study the mean square stability properties of the modified equation \eqref{eq:pre_mod}. To proceed, we make the following assumptions.

\begin{ass} \label{as:main}
There exists a constant $L>0$ such that
    $$
    \|\nabla F(x)-\nabla F(y)\| \le L \|x-y \|, \quad \forall x,y \in \mathbb{R}^{d}.
    $$
\end{ass}
\begin{ass}\label{as:main1}
There exists a constant $\mu>0$ such that
    $$
      \left\langle x-y, \nabla F(x)-\nabla F(y)\right\rangle\ge \mu \|x-y \|^2, \quad \forall x,y \in \mathbb{R}^{d}.
      $$
\end{ass}
\begin{ass}\label{as:main2}
There  exists  a constant $K>0$ such that
    $$
      \left\langle x-y, \nabla \nabla  F(x)\nabla F(x)-\nabla \nabla  F(y)\nabla F(y)\right\rangle\ge K \|x-y \|^2, \quad \forall x,y \in \mathbb{R}^{d}.
    $$
\end{ass}
\begin{remark}  
Assumption \ref{as:main1} is equivalent to strong convexity and hence implies that the function $F(x)$ has a unique minimizer. In addition, in the quadratic case  $F(x)=\frac{1}{2}x^{T}Ax+b^{T}x$, where $A$ is a positive definite matrix,  Assumptions~\ref{as:main}--\ref{as:main2} are satisfied. We may take $L=\lambda_{max}$, $\mu=\lambda_{\min}$ and $K=\lambda^{2}_{\min}$, where $\lambda_{\min}, \lambda_{\max}$ are the smallest and largest eigenvalues of the matrix $A$. Furthermore, the first two assumptions are automatically satisfied if $f \in \mathcal{\mu,L}$ the set of $L$-smooth and $\mu$-strongly convex functions.       
\end{remark}

We then have the following theorem.
    \begin{theorem} \label{thm:main}
        Consider the modified SDE \eqref{eq:pre_mod} for the stochastic estimator \eqref{eq:est2}, hence with 
        $\Sigma(\tilde{X})$ given in Proposition~\ref{prop:sigma1}. Under Assumptions~\ref{as:main}--\ref{as:main2}, and assuming that $X(0)$ is deterministic, the solution $X(t)$ of  \eqref{eq:pre_mod} satisfies
        \begin{equation}
         \mathbb{E}\left[\| X(t)-X_\star\|^2\right] \leq e^{-\alpha t}\| X(0)-X_\star\|^2
        \end{equation}
        with $\alpha=2\mu-hK +h(d-1)L^{2}$ and $X_{\star}$ is the unique minimizer of $F$. Hence, if the timestep satisfies $h\le 2\mu/((d-1)L^{2}-K)$ we have 
        \begin{equation}
         \lim_{t \rightarrow \infty} \mathbb{E}\norm{X(t)-X_{\star}}^{2} =0.  
        \end{equation}
    \end{theorem}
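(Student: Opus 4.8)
The plan is to apply It\^o's formula to the function $V(x)=\norm{x-X_\star}^2$ along the solution of the modified SDE \eqref{eq:pre_mod}, and then to bound the drift of $\mathbb{E}[V(X(t))]$ by $-\alpha\,\mathbb{E}[V(X(t))]$, so that Gr\"onwall's inequality yields the claimed exponential decay. First I would write out the generator of \eqref{eq:pre_mod} applied to $V$: the drift term contributes
\[
2\left\langle X(t)-X_\star,\ -\nabla F(X(t))-\tfrac{h}{2}\nabla\nabla F(X(t))\nabla F(X(t))\right\rangle,
\]
using $\nabla\left(\tfrac14\norm{\nabla F}^2\right)=\tfrac12\nabla\nabla F\,\nabla F$, and the diffusion term contributes $h\,\mathrm{tr}\,\Sigma(X(t))$, since $\tfrac12 gg^T:\nabla\nabla V = \mathrm{tr}(gg^T)$ for $V=\norm{\cdot-X_\star}^2$ and here $gg^T = h\,\Sigma$.

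The next step is to handle each of the three pieces using Assumptions~\ref{as:main}--\ref{as:main2} together with the optimality relations $\nabla F(X_\star)=0$ and $\nabla\nabla F(X_\star)\nabla F(X_\star)=0$. The first inner product is bounded above by $-2\mu\norm{X(t)-X_\star}^2$ by strong convexity (Assumption~\ref{as:main1} with $y=X_\star$). The second is bounded above by $-hK\norm{X(t)-X_\star}^2$ by Assumption~\ref{as:main2} with $y=X_\star$. For the trace term I would use the expression from Proposition~\ref{prop:sigma1}: since $\mathrm{tr}(U_i v v^T U_i^T)=\norm{U_i v}^2 = v_i^2$ for $v=\nabla F(X(t))$, we get $\mathrm{tr}\,\Sigma(X(t)) = d\sum_{i=1}^d v_i^2 - \norm{v}^2 = (d-1)\norm{\nabla F(X(t))}^2$, and then Lipschitz continuity of $\nabla F$ (Assumption~\ref{as:main}, again with $y=X_\star$) gives $\norm{\nabla F(X(t))}^2 \le L^2\norm{X(t)-X_\star}^2$. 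Combining, $\mathcal{L}V(X(t)) \le -(2\mu - hK + h(d-1)L^2)\norm{X(t)-X_\star}^2 = -\alpha V(X(t))$.

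Finally I would take expectations: under the regularity already implied by Assumptions~\ref{as:main}--\ref{as:main2} (global Lipschitz drift and diffusion, hence existence of a unique strong solution with finite second moments), the stochastic integral is a true martingale, so $\frac{d}{dt}\mathbb{E}[V(X(t))] = \mathbb{E}[\mathcal{L}V(X(t))] \le -\alpha\,\mathbb{E}[V(X(t))]$, and Gr\"onwall gives $\mathbb{E}[V(X(t))] \le e^{-\alpha t}V(X(0))$ with $X(0)$ deterministic. The condition $h \le 2\mu/((d-1)L^2 - K)$ (when $(d-1)L^2 > K$) makes $\alpha \ge 0$; strict positivity for smaller $h$ forces the limit to be zero.

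The main obstacle I anticipate is purely bookkeeping rather than conceptual: correctly carrying the factor of $h$ from the $\sqrt{h}$ in front of $\sqrt{\Sigma}$ into the diffusion coefficient $gg^T = h\Sigma$, and correctly computing $\mathrm{tr}\,\Sigma$ from Proposition~\ref{prop:sigma1} so that the $(d-1)L^2$ term appears with the right sign and constant. A secondary point worth a sentence is justifying that the local martingale from the It\^o integral is a genuine martingale (or working with a localizing sequence and passing to the limit via Fatou/monotone convergence), which is routine given the linear growth of the coefficients implied by the Lipschitz assumptions.
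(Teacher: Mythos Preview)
Your proposal is correct and follows essentially the same route as the paper's proof: apply It\^o's formula to $\|X-X_\star\|^2$, use $\nabla\|\nabla F\|^2=2\nabla\nabla F\,\nabla F$ and $\mathrm{tr}\,\Sigma=(d-1)\|\nabla F\|^2$, bound the three resulting pieces via Assumptions~\ref{as:main}--\ref{as:main2} (with $y=X_\star$), then take expectations and apply Gr\"onwall. One bookkeeping slip to fix in your ``Combining'' line: your three bounds give $\mathcal{L}V\le\big(-2\mu-hK+h(d-1)L^2\big)V$, i.e.\ decay rate $\alpha=2\mu+hK-h(d-1)L^2=2\mu-h\big((d-1)L^2-K\big)$, which is exactly what the paper's proof obtains (the expression $\alpha=2\mu-hK+h(d-1)L^2$ in the theorem statement is a typo).
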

    \begin{proof}
        Applying the Ito formula to the function $\phi(X)=\|X-X_\star \|^2$, where $X(t)$ solves \eqref{eq:pre_mod} and $X_\star$ is the minimizer of $F$, we get
        \begin{eqnarray} \label{eq:Ito_f}
        d \|X(t)-X_\star \|^2 & =& \left[-2\left\langle X(t)-X_\star,\nabla F(X(t))\right\rangle-\displaystyle\frac{h}{2}\left\langle X(t)-X_*,\nabla \| \nabla F(X(t))\|^2\right\rangle
        +h{\rm Tr}\left(\Sigma(X(t))\right)\right]dt \nonumber \\
        &+& M_g,
        \end{eqnarray}
        where $M_g$ is a Martingale. 
        Since
        \begin{eqnarray*}
        \nabla \| \nabla F(X)\|^2 &=& 2 \nabla \nabla F(X)\nabla F(X),\\
        {\rm Tr}\left(\Sigma(X)\right) &=& (d-1)\| \nabla F(X)\|^2,
        \end{eqnarray*}
        we have
        \begin{eqnarray*}
            d \|X(t)-X_\star \|^2 & =& \left[-2\left\langle X(t)-X_\star,\nabla F(X(t))+\frac{h}{2} \nabla \nabla F(X(t))\nabla F(X(t)) \right \rangle + h \norm{\nabla F(x(t))}^{2}\right]dt \\
            &+& M_g.
        \end{eqnarray*}
        Now using the fact that $\nabla F(X_\star)=0$
         along with Assumptions~\ref{as:main}--\ref{as:main2}, we obtain 
        \[
        d\| X(t)-X_\star\|^2 \le -\alpha \|X(t)-X_\star \|^2 +M_g,
        \]
        where $\alpha := 2\mu-h \big((d-1)L^{2}-K\big)$. Taking an expectation and using the Gronwall inequality then gives us the desired result.
    \end{proof}

\begin{remark}
For $d=1$, the iteration  \eqref{eq:alg1} becomes deterministic and corresponds to gradient descent for a univariate function. In this case,  Theorem~\ref{thm:main} implies deterministic convergence to the minimizer $X_{\star}$ with rate $-2\mu-hK$.  When $h=0$ this rate coincides with the rate of convergence that we expect for the gradient flow \eqref{eq:GF} under Assumption  \ref{as:main1}. Furthermore, when $h>0$ we see that the modified equation bound for gradient descent convergences more rapidly than the original gradient flow bound. 
\end{remark}

\begin{remark}
In Theorem \ref{thm:main}, if one were to use a different stochastic gradient method, the only change in \eqref{eq:Ito_f} 
involves the term associated with the trace of $\Sigma(X)$, which in turn directly relates to the covariance of the stochastic estimator $\widehat{\nabla} F(X,w)$. In the case of the stochastic gradient method in particular, it would no longer be true that  $ {\rm Tr}\left(\Sigma(X_{\star})\right)=0$ or that this term could be directly bounded by a function of $\nabla F(X)$, and hence one cannot conclude that the solution to the modified equation \eqref{eq:pre_mod} is mean-square stable.  Nevertheless, under Assumptions~\ref{as:main1}--\ref{as:main2} there would be a period of time for which the dynamics of the modified equation would be contractive before driven by the properties of $\Sigma(X)$. This coincides precisely with the behaviour of the stochastic gradient descent method discussed in \cite{LTE19}.
\end{remark}

\section{Discussion}
\label{sec:disc}

Modified equations provide a useful approach for creating continuous-time models that can be more straightforward to analyze than 
an underlying discrete iteration. In this work, following on from \cite{LTE19}, 
the modified equations take the form of stepsize-dependent diffusion equations that accurately describe the dynamics of 
a stochastic optimization algorithm.

There are many avenues for future work in this area.
The general setting analyzed in Proposition~\ref{prop:sigma1} raises the possibility of 
comparing the modified equations associated with a range of 
stochastic optimization algorithms in order to gain new insights into their relative strengths and weaknesses,
It would also be of interest to derive and study modified equations that approximate 
stochastic optimization algorithms to higher order. 
Further, incorporating a variable stepsize, or ``adaptive learning rate,'' regime into the modified equation framework would greatly expand the scope of these results. 

\appendix
\section{Useful Calculations}
We now report on the calculations used to achieve \eqref{eq:expa}--\eqref{eq:expb}. Given $x_0 = x$, the single-step iteration of the stochastic method \eqref{eq:general_opt} reads
$$
x_1 = x-h\widehat{\nabla} F(x,w).
$$
For any test function $\phi : \mathbb{R}^d \mapsto \mathbb{R}$,  Taylor expansion of $\phi(x_1)$ around $x$ gives 
$$
\phi(x_1) = \phi(x)-h \nabla \phi(x)\cdot \widehat{\nabla}F(x,w) +\displaystyle\frac{1}{2}h^2\widehat{\nabla}F(x,w)\cdot \nabla \nabla \phi(x)\widehat{\nabla} F(x,w)+\mathcal{O}\left(h^3\right).
$$
Using Einstein's repeated index notation, we have
\begin{equation*}
\begin{aligned}
    \widehat{\nabla}F(x,w)\cdot \nabla \nabla \phi(x)\widehat{\nabla} F(x,w)& = 
    \widehat{\nabla} F(x,w)_i \bigg(\nabla \nabla \phi(x)\widehat{\nabla} F(x,w)\bigg)_i
    =  \widehat{\nabla}F(x,w)_i  \nabla \nabla \phi(x)_{ij}\widehat{\nabla}F(x,w)_j\\
    & = \widehat{\nabla}F(x,w)_i \widehat{\nabla}F(x,w)_j \nabla \nabla \phi(x)_{ji} =  \widehat{\nabla}F(x,w) \widehat{\nabla}F(x,w)^T :\nabla\nabla \phi,
    \end{aligned}
\end{equation*}
where we have used the symmetry of the matrix $\nabla \nabla \phi$. Hence, we obtain 
\begin{equation}
\label{eq:exp2}
\phi(x_1) = \phi(x)-h \nabla \phi(x)\cdot \widehat{\nabla}F(x,w) +\displaystyle\frac{1}{2}h^2 \widehat{\nabla}F(x,w) \widehat{\nabla}F(x,w)^T :\nabla\nabla \phi+\mathcal{O}\left(h^3\right).
\end{equation}
By taking the expectation in \eqref{eq:exp2} and considering that $\mathbb{E}\big(\widehat{\nabla}F(x,w)\big)= \nabla F(x)$, we obtain the desired expansion for $\mathbb{E}\big(\phi(x_1)|x_0=x\big) = u(x,h)$, with the differential operators $\mathcal{L}$ and $\mathcal{A}_1$ given in \eqref{eq:expa} and \eqref{eq:expb}, respectively.
We next derive an expansion for $\tilde{U}(x,h) = \mathbb{E}\big(\phi(\tilde{X}(h))|x_0=x\big)$, where $\tilde{X}(h)$ is the solution to the modified SDE \eqref{eq:gen_mod} at time $t=h$. For such a process, the generator is given by
\begin{equation}
\label{eq:gen_mod}
\tilde{\mathcal{L}}_h = -\nabla F \cdot \nabla +hF_1\cdot \nabla+\displaystyle\frac{1}{2}h \ G_1 G_1^T : \nabla \nabla = \mathcal{L}
+h\mathcal{L}_1
\end{equation}
and a stochastic Taylor expansion gives 
\begin{equation}
    \label{eq:mod_ex1}
    \tilde{U}(x,h) = \phi(x)+h \tilde{\mathcal{L}}_h \phi(x)+\displaystyle\frac{1}{2}h^2 \tilde{\mathcal{L}}_h^2 \phi(x)+\cdots.
\end{equation}
Substituting \eqref{eq:gen_mod} into \eqref{eq:mod_ex1} and neglecting higher order terms, we obtain 
\begin{equation}
\label{eq:mod_ex2}
\begin{aligned}
\tilde{U}(x,h) &= \phi(x)+h \left(\mathcal{L}+h\mathcal{L}_1\right)\phi(x)+\displaystyle\frac{1}{2}h^2 \left(\mathcal{L}+h\mathcal{L}_1\right)^2 \phi(x)+\cdots\\
&  =  \phi(x)+h\mathcal{L}\phi(x)+h^2 \mathcal{L}_1 \phi(x)+\displaystyle\frac{1}{2}h^2 \mathcal{L}^2 \phi(x)+\cdots\\
& = \phi(x)+h   \mathcal{L}\phi(x)+h^2\big(\mathcal{L}_1\phi(x) +\displaystyle\frac{1}{2}\mathcal{L}^2\phi(x)\big)+\cdots.
\end{aligned}
\end{equation}
From \eqref{eq:mod_ex2}, we see that $u(x,h)-\tilde{U}(x,h) = \mathcal{O}(h^3)$ if
$$\mathcal{L}_1\phi(x) = \mathcal{A}_1\phi(x)-\displaystyle\frac{1}{2}\mathcal{L}^2\phi(x).
$$

\bibliographystyle{abbrv}
\bibliography{references}

\end{document}